\theoremstyle{definition}
\newtheorem{definition}{Definition}%
\theoremstyle{plain}
\newtheorem{theorem}[definition]{Theorem}
\newtheorem{proposition}[definition]{Proposition}
\newtheorem{remark}[definition]{Remark}
\title{On the nonexistence of certain orthogonal arrays of strength four}
\author{Rebeka Kiss}
\address{Bolyai Institute \\
	University of Szeged \\
	Aradi v\'ertan\'uk tere 1\\
	H-6720 Szeged, Hungary}
\email{Kiss.Rebeka@stud.u-szeged.hu}
\author{G\'abor P. Nagy}
\address{Department of Algebra \\
	Budapest University of Technology and Economics\\
	Egry J\'ozsef utca 1\\
	H-1111 Budapest, Hungary}
\address{Bolyai Institute \\
	University of Szeged \\
	Aradi v\'ertan\'uk tere 1\\
	H-6720 Szeged, Hungary}
\email{nagyg@math.bme.hu}
\thanks{Support provided from the National Research, Development and Innovation Fund of Hungary, financed under the 2018-1.2.1-NKP funding scheme, within the SETIT Project 2018-1.2.1-NKP-2018-00004. Partially supported by NKFIH-OTKA Grants 119687, 115288 and SNN 132625.}
\keywords{Orthogonal array, NSUCRYPTO}
\subjclass[2010]{05B15}
\begin{document}

\begin{abstract}
We show that no orthogonal arrays $OA(16 \lambda, 11, 2,4)$ exist with $\lambda=6$ and $\lambda=7$. This solves an open problem of the NSUCRYPTO Olympiad 2018. Our result allows us to determine the minimum weights of certain higher-order correlation-immune Boolean functions.
\end{abstract}

\maketitle

\section{Introduction}

In the Fifth International Students' Olympiad in Cryptography NSUCRYPTO'2018 \cite{nsu18paper,problem18} the following problem was stated. Given three positive integers $n$, $t$, and $\lambda$ such that $t < n$, we call a $\lambda 2^t \times n$ binary array (i.e., matrix over the two-element field) a $t-(2, n, \lambda)$ orthogonal array if in every subset of $t$ columns of the array, every (binary) $t$-tuple appears in exactly $\lambda$ rows. $t$ is called the strength of this orthogonal array. Find a $4-(2, 11, \lambda)$ orthogonal array with minimal value of $\lambda$. So far, the best known answer to this question is $\lambda=8$. Delsarte's Linear Programming Bound \cite[Theorem 4.15 and Table 4.19]{hedayat} implies $\lambda \geq 6$. 

In this short note, we use the terminology of the monograph \cite{hedayat} and we denote a $t-(2, n, \lambda)$ orthogonal array by $OA(2^t \lambda, n,2,t)$. The integers $N=2^t \lambda$ and $n$ are called the number of runs and the number of factors of the array. In an orthogonal array, the same row can occur multiple times. The orthogonal array is \textit{simple,} if each row occurs exactly once. 

Our solution to the problem is stated in the following theorem.

\begin{theorem}\label{thm:1}
No orthogonal arrays $OA(16 \lambda, 11, 2,4)$ exist with $\lambda=6$ and $\lambda=7$. 
\end{theorem}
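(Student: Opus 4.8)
The plan is to turn Theorem~\ref{thm:1} into a finite feasibility problem and settle it by a structured, symmetry-reduced exhaustive search.

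\emph{Reformulation.} I would identify an $OA(16\lambda,11,2,4)$ with the multiplicity function of its rows, i.e.\ a map $f\colon\mathbb{F}_2^{11}\to\mathbb{Z}_{\ge 0}$ with $\sum_{x}f(x)=N:=16\lambda$. Writing $\widehat f(v)=\sum_{x}f(x)(-1)^{\langle v,x\rangle}$, the strength-$4$ property is equivalent to $\widehat f(v)=0$ for every $v\in\mathbb{F}_2^{11}$ with $1\le\operatorname{wt}(v)\le 4$; equivalently, for each such $v$ exactly $N/2$ rows satisfy $\langle v,x\rangle=1$. Thus one must decide solvability of an integer system in $2^{11}=2048$ unknowns: the equation $\sum_x f(x)=N$ together with the $\sum_{i=1}^{4}\binom{11}{i}=561$ balance equations. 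Projecting the array onto any $4$ coordinates shows that every pattern occurs $\lambda$ times there, hence $0\le f(x)\le\lambda\le 7$, so the search space is finite.

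\emph{Symmetry and recursive structure.} Coordinate permutations and translations $f(x)\mapsto f(x+a)$ (in particular column complementations) preserve the zero-pattern of $\widehat f$, so $G=\mathbb{F}_2^{11}\rtimes S_{11}$ acts on the solution set and one may normalize, e.g.\ by putting a row of maximal multiplicity at the origin. The key reduction, however, is recursive: fixing one coordinate splits the array into two parts which, after deleting that coordinate, are $OA(N/2,10,2,3)$'s whose union is an $OA(N,10,2,4)$; iterating, the lower-strength layers involve arrays on fewer factors that are far cheaper to handle. One therefore builds candidate arrays coordinate by coordinate, at each stage propagating the $561$ linear constraints and discarding partial configurations that are either infeasible or isomorphic to one already seen, and one checks for $N=96$ and $N=112$ that nothing completes to an $OA(\,\cdot\,,11,2,4)$.

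\emph{Further pruning and the main difficulty.} The tree can be thinned by Delsarte's linear program applied to the distance distribution $(B_0,\dots,B_{11})$ together with integrality of the $NB_i$ and $B_0\ge 1$, by the moment identities $\sum_w\binom{w}{j}a_w=\binom{11}{j}N/2^{j}$ for $j\le 4$ (with $a_w$ the number of rows of weight $w$), and by divisibility of $\widehat f(v)$ for $\operatorname{wt}(v)=5$ arising from $5$-coordinate projections (for $N=96$ these force $\widehat f(v)\in 32\mathbb{Z}$). I expect the main obstacle to be purely computational: organizing the backtracking — which coordinates to branch on, efficient isomorph rejection of partial arrays, constraint propagation — so that it provably terminates, with the strength-$3$ layer ($OA(48,10,2,3)$, resp.\ $OA(56,10,2,3)$) being where almost all partial solutions must be eliminated, and getting past that layer efficiently being the crux.
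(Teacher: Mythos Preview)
Your plan is sound in outline and, if executed, would essentially reproduce the column-extension classification of Schoen, Eendebak and Nguyen (and, on the ILP side, of Bulutoglu and Margot). The paper, however, does not carry out any such computation: its proof consists of looking up those published tables and observing that already no $OA(96,8,2,4)$ and no $OA(112,7,2,4)$ exist; since deleting columns from an $OA(N,11,2,4)$ yields an $OA(N,k,2,4)$ for every $k\le 11$, nonexistence at $k=8$ (resp.\ $k=7$) immediately kills $\lambda=6$ (resp.\ $\lambda=7$) at $k=11$.

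So the genuine difference is not in method but in scope: you are proposing to redo from scratch a computation that is already in the literature, and you are aiming it at $11$ factors when in fact your own coordinate-by-coordinate recursion would die much earlier --- at $7$ or $8$ factors --- long before the strength-$3$ layer on $10$ factors that you flag as the crux ever becomes relevant. Concretely, in your scheme one first needs all $OA(N,k,2,4)$ for $k=5,6,7,\ldots$ before attempting to split the last coordinate, and those enumerations terminate with an empty list at $k=8$ for $N=96$ and at $k=7$ for $N=112$. Recognizing this collapses the ``main difficulty'' you anticipate. As written, your text is a plan rather than a proof: nothing is actually verified, and for a self-contained argument you would still have to either run and certify the search or, as the paper does, cite \cite{margot,eendebak,eendebakpage}.
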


A Boolean function $f\colon\mathbb{F}_2^n\to \mathbb{F}_2$ is \textit{correlation-immune} of some order $t <n$ (in brief, $t$-CI) if fixing at most $t$ of the $n$ input variables $x_1,\ldots,x_n$ does not change the output distribution of the function, whatever are the positions chosen for the fixed variables and the values chosen for them. Equivalently, the support of the function must be a simple binary orthogonal array of strength $t$, see \cite{CarletChen,CarletGuilley}. The weight of a Boolean function is the size of its support. Low weight $t$-CI Boolean functions have practical importance in cryptography, since they resist the Siegenthaler attack. Furthermore, $t$-CI Boolean functions allow reducing the overhead while keeping the same resistance to side channel attacks. See \cite{CarletGuilley} and the references therein. 

Theorem \ref{thm:1} allows us to determine the minimum weights of $t$\textsuperscript{th}-order correlation-immune Boolean functions in $n$ variables,
\[n \in \{11,12,13\}, \quad t \in \{4,5\}.\]
These values were marked as unknown in \cite[Table 2]{CarletChen} and \cite[Table 2]{CarletGuilley}.

We would like to thank Claude Carlet (Paris, France and Bergen, Norway) for his detailed comments on the previous version of this paper.

\section{Proof of the theorem}

Our proof uses the results by Bulutoglu and Margot \cite{margot}, by Schoen, Eendebak and Nguyen \cite{eendebak,eendebak-corr} and also by Eendebak \cite{eendebakpage}, where orthogonal arrays with small parameters are classified. Bulutoglu and Margot \cite{margot} used integer linear programming (ILP) methods, while the algorithms of Schoen, Eendebak and Nguyen \cite{eendebak,eendebak-corr} are based on the systematic study of the extensions of orthogonal arrays by new columns. Moreover, both approaches must deal with the isomorphism problem of orthogonal arrays. 

\begin{proof}[Proof of the theorem]
	From \cite[Table 1]{margot}, \cite[Table III]{eendebak} and also \cite{eendebakpage} we can see that no $OA(96,8,2,4)$ and no $OA(112,7,2,4)$ exist. We explain the relevant rows of the two tables. In \cite[Table 1]{margot} there are 4 columns with the following meanings:
	\begin{itemize}
		\item OA gives the parameters of the classified orthogonal array.
		\item $m'$ is the number of linearly independent equality constraints of the generated ILP problem.
		\item $p_{\max}$ is an upper bound on the maximum number of times a run can appear in an $OA(2^t\lambda, n,2,t)$.
		\item $h$ is the number of non-isomorphic orthogonal arrays with the given parameters.
	\end{itemize}
\begin{table}[ht]
	\centering	
	\begin{tabular}{|c|c|c|c|}
		\hline
		OA & $m'$ & $p_{\max}$ & $h$ \\ \hline
		$OA(96,8,2,4)$ & $163$ & $2$ & $0$ \\ \hline
		$OA(112,7,2,4)$ & $99$ & $3$ & $0$ \\ \hline
	\end{tabular}
\end{table}

	From this table we can see that if $\lambda=6$ then no orthogonal array exists with $n=8$, which implies that no OA exists with $n\geq{8}$. Similarly if $\lambda=7$ then no orthogonal array exists with $n=7$, thus no OA exists with $n\geq{7}$.
	
	In \cite[Table III]{eendebak} orthogonal arrays with strength 4 are included, where 
	\begin{itemize}
		\item $N$ gives the run-size of the classified orthogonal array.
		\item The notation $2^a$ for the factor set means a binary array with $a$ factors. 
		\item $a_{\max}$ is the maximum number $a$, such that there exists an $OA$ with $N$ runs and $a$ factors. 
		\item The numbers $m_a$, $a \in \{t+1, \ldots,a_{\max}\}$, in the last column denote the number of isomorphism classes of arrays with $N$ runs and $a$ factors. 
	\end{itemize}
\begin{table}[ht]
	\centering	
	\begin{tabular}{|c|c|c|c|}
		\hline
		$N$ & Factor set & $a_{\max}$ & Isomorphism classes \\ \hline
		$96$ & $2^a$ & $7$ & $4,9,4$ \\ \hline
		$112$ & $2^a$ & $6$ & $4,3$ \\ \hline
	\end{tabular}
\end{table}

	This means that with run-size $96$ the maximum number $a$ such that an $OA(96,a,2,4)$ exists is $7$, and with run-size $112$ the maximum number $a$ with an existing $OA(112,a,2,4)$ is 6. 
\end{proof}

\begin{remark}
	According to \cite{eendebak-corr}, the number of isomorphism classes of binary orthogonal arrays with run-size $N=128$, factor-size $n=11$, and strength $t=4$ is $477$. The papers \cite{margot,eendebak} claim to achieve the above results within a few seconds. Using SageMath \cite{sagemath}, the GLPK package \cite{GLPK} and the integer linear programming solver SCIP \cite{GamrathEtal2020OO}, a straightforward implementation of the formulas of \cite{margot} used $51\,630$ sec and $481$ sec CPU time for the nonexistence of $OA(96,8,2,4)$ and $OA(112,7,2,4)$, respectively. 
\end{remark}

\section{Minimum weight of correlation-immune Boolean functions}

Using the notation of \cite{hedayat}, we denote by $F(n,2,t)$ the minimal number of runs $N$ in any $OA(N,n,2,t)$ for given values $n$ and $t$. Theorem \ref{thm:1} says that $F(11,2,4)\geq 128$, and in fact, equality holds. Let $\omega_{n,t}$ denote the minimum weight of $t$-CI Boolean functions in $n$ variables. Equivalently, $\omega_{n,t}$ is the minimum number of runs in a \textit{simple} orthogonal array with number of factors $n$ and strength $t$. Hence, 
\begin{align}
F(n,2,t)\leq \omega_{n,t}. \label{eq:1}
\end{align}
Suppose $A$ is an $OA(N,n,s,t)$. As in \cite[5]{hedayat}, one can construct an $OA(N/s,n-1,s,t-1)$, say $A'$. Clearly, if $A$ is simple then $A'$ is simple too. This implies
\begin{align}
F(n-1,2,t-1)&\leq \frac{1}{2}F(n,2,t), \label{eq:2}\\
\omega_{n-1,t-1} &\leq \frac{1}{2}\omega_{n,t}. \label{eq:3}
\end{align}

We are now able to fill some unknown values of \cite[Table 2]{CarletChen} and \cite[Table 2]{CarletGuilley}.

\begin{proposition}
For the minimum weight of $t$-CI Boolean functions in $n$ variables, we have
\begin{align} \label{eq:4}
\omega_{11,4}=\omega_{12,4}=\omega_{13,4}=\omega_{14,4}=\omega_{15,4}=128,
\end{align}
and
\begin{align} \label{eq:5}
\omega_{11,5}=\omega_{12,5}=\omega_{13,5}=\omega_{14,5}=\omega_{15,5}=\omega_{16,5}=256.
\end{align}
\end{proposition}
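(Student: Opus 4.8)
The plan is to derive all eleven equalities from Theorem~\ref{thm:1}, the elementary inequalities \eqref{eq:1}--\eqref{eq:3}, and one classical family of codes; for each value I would match a lower bound with an upper bound.

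For the \emph{lower} bounds I would first record that deleting columns from an orthogonal array of strength $t$ produces an orthogonal array of the same strength, so $F(n,2,4)\ge F(11,2,4)$ for every $n\ge 11$. By Theorem~\ref{thm:1} together with the existence of an $OA(128,11,2,4)$ we have $F(11,2,4)=128$, hence by \eqref{eq:1}
\[
\omega_{n,4} \geq F(n,2,4) \geq 128 \qquad (n\ge 11).
\]
For strength $5$ I would combine \eqref{eq:3} and \eqref{eq:1} in the form $\omega_{n,5}\ge 2\,\omega_{n-1,4}\ge 2\,F(n-1,2,4)$. For $n\ge 12$ the right-hand side is at least $2F(11,2,4)=256$; for $n=11$ one needs in addition that $F(10,2,4)=128$. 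This last equality is not a consequence of Theorem~\ref{thm:1} itself, but it follows from the same classifications \cite{margot,eendebak,eendebakpage}: the Rao bound forces $\lambda\ge 4$, while $OA(16\lambda,10,2,4)$ is excluded for $\lambda=4,5$ (nonexistence of $OA(64,10,2,4)$ and $OA(80,10,2,4)$) and for $\lambda=6,7$ (column deletion to $OA(96,8,2,4)$, resp.\ $OA(112,7,2,4)$, which do not exist). Thus $\omega_{n,5}\ge 256$ for $11\le n\le 16$.

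For the \emph{upper} bounds I would use the Nordstrom--Robinson code, a binary (nonlinear) code with parameters $(16,256,6)$ and Delsarte dual distance $6$; it is a \emph{simple} $OA(256,16,2,5)$. Deleting $j$ of its columns preserves strength $5$, and because its minimum distance is $6$ any two of its $256$ words still differ in a surviving coordinate whenever $j\le 5$, so the punctured array is again simple. Hence a simple $OA(256,n,2,5)$ exists for $11\le n\le 16$, i.e.\ $\omega_{n,5}\le 256$ there, which together with the lower bounds proves \eqref{eq:5}. Applying \eqref{eq:3} to these arrays gives $\omega_{m,4}\le\tfrac12\,\omega_{m+1,5}\le 128$ for $10\le m\le 15$; combined with the lower bounds this proves \eqref{eq:4}.

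The step I expect to be the real obstacle is the lower bound for $\omega_{11,5}$: it is the single place where the chain of inequalities passes through $\omega_{10,4}$ rather than a quantity controlled by Theorem~\ref{thm:1}, so one must bring in the extra nonexistence results for $OA(64,10,2,4)$ and $OA(80,10,2,4)$ from the cited classifications. A subtler point to verify carefully is that puncturing the Nordstrom--Robinson code genuinely yields \emph{simple} strength-$5$ arrays all the way down to $n=11$ --- this is exactly where the minimum distance $6$ is used, and it is also why \eqref{eq:5} cannot be extended below $n=11$ by this argument.
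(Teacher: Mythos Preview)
Your argument follows the same skeleton as the paper's proof: Nordstrom--Robinson gives the upper bounds for $\omega_{n,5}$, inequality \eqref{eq:3} transfers these to $\omega_{n,4}$, and Theorem~\ref{thm:1} together with \eqref{eq:1} supplies the lower bounds. Two points are worth flagging.

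First, your treatment of simplicity after puncturing is cleaner than the paper's. The paper simply asserts that ``straightforward computation shows'' the array obtained by deleting five columns of the Nordstrom--Robinson code is simple; you instead observe that minimum distance~$6$ forces any two rows to remain distinct after removing at most five coordinates, which settles simplicity for all $11\le n\le 16$ at once with no computation.

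Second, you correctly isolate the one place where the paper's chain of inequalities is thin: the lower bound $\omega_{11,5}\ge 256$ reduces via \eqref{eq:3} to $\omega_{10,4}\ge 128$, which is \emph{not} covered by Theorem~\ref{thm:1}. The paper's sentence ``From \eqref{eq:1} and \eqref{eq:3} follow \eqref{eq:4} and \eqref{eq:5}'' tacitly relies on $\omega_{10,4}=128$ being already known (it is, in the tables of \cite{CarletChen,CarletGuilley}), whereas you rederive $F(10,2,4)\ge 128$ from the same classifications used in the proof of Theorem~\ref{thm:1}. Your extra paragraph is a genuine completion of the argument rather than a detour.
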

\begin{proof}
	The Nordstrom--Robinson code and also Sloane gives a simple $OA(256,16,2,5)$, see \cite{nsu18paper,BIERBRAUER2007158} and \cite{sloanepage}. Straightforward computation shows that deleting the last $5$ columns of it, the resulting orthogonal array is simple. Hence, $\omega_{n,5}\leq 256$ for $n\in \{11,\ldots,16\}$. By \eqref{eq:3}, $\omega_{n,4} \leq 128$ for $n\in \{10,\ldots,15\}$. Theorem \ref{thm:1} implies $F(n,2,4)\geq 128$ for $n\geq 11$. From \eqref{eq:1} and \eqref{eq:3} follow \eqref{eq:4} and \eqref{eq:5}. 
\end{proof}

\printbibliography

\end{document}